\theoremstyle{plain}
\newcommand{\id}{\operatorname{id}}
\newcommand{\Hom}{\operatorname{Hom}}
\newcommand{\End}{\operatorname{End}}
\newcommand{\Mod}{\operatorname{Mod}}
\newcommand{\Ind}{\operatorname{Ind}}
\newcommand{\ind}{\operatorname{ind}}
\newtheorem{theorem}{Theorem}[section]
\newtheorem{corollary}[theorem]{Corollary}
\newtheorem{lemma}[theorem]{Lemma}
\newtheorem{proposition}[theorem]{Proposition}
\theoremstyle{definition}
\title{\textbf{Derived parabolic induction}}
\author{Sarah Scherotzke, Peter Schneider}
\date{\today}
\address{Department of Mathematics\\
Universit\'{e} du Luxembourg \\
Maison du Nombre\\
6, Avenue de la Fonte\\
L-4364 Esch-sur-Alzette, Luxembourg}
\email{sarah.scherotzke@uni.lu}
\address{Universit\"{a}t M\"{u}nster,  Mathematisches Institut,  Einsteinstr. 62,
48149 M\"{u}nster,  Germany,
 http://www.uni-muenster.de/math/u/schneider/ }
\email{pschnei@wwu.de }
\begin{document}

\begin{abstract}
The classical parabolic induction functor is a fundamental tool on the representation theoretic side of the Langlands program. In this article, we study its derived version. It was shown by the second author that the derived category of smooth $G$-representations over $k$,  $G$ a $p$-adic reductive group and $k$ a field of characteristic $p$, is equivalent to the derived category of a certain differential graded $k$-algebra $H_G^\bullet$, whose zeroth cohomology is a classical Hecke algebra. This equivalence predicts the existence of a derived parabolic induction functor on the dg Hecke algebra side, which we construct in this paper. This relies on the theory of six-functor formalisms for differential graded categories developed by O.\ Schn\"urer. We also discuss the adjoint functors of derived parabolic induction.
\end{abstract}

\maketitle

\section{Introduction}

The smooth representation theory of $p$-adic reductive groups $G$ is one of the cornerstones of the local Langlands program. A very important technique to construct such representations is the parabolic induction. For this one chooses a parabolic subgroup $P \subseteq G$ with Levi decomposition $P = MN$. Starting from a smooth representation $V$ of the Levi factor $M$ one first inflates $V$ to $P$ and then induces further from $P$ to $G$ (using locally constant functions $G \rightarrow V$) in order to obtain a smooth representation $\Ind_P^G(V)$ of $G$. This construction is easily seen to be an exact functor. It therefore passes for trivial reasons to a functor between derived categories. This seems to make the choice of title for this paper a bit strange.

But there is another basic tool to investigate smooth representations which is the notion of Hecke algebras. Here one starts with a choice of some well understood ``big'' smooth $G$-representation $\mathbf{X}$ and forms the (opposite of the) algebra $H_G$ of its $G$-endomorphisms, so that $\mathbf{X}$ becomes a $(G,H_G)$-bimodule. One then has a pair of adjoint functors $\Hom_G(\mathbf{X},-)$ and $\mathbf{X} \otimes_{H_G} -$ between the category of smooth $G$-representations and the category of (left) $H_G$-modules. These may be used to reduce representation theoretic questions to purely algebraic questions about modules. It turns out, though, that the behaviour of these two functor depends very much on the characteristic of the coefficient field $k$ which one uses for the $G$-representations. If $k$ has characteristic zero then the connection between smooth $G$-representations and Hecke modules is very close and has been studied intensively.

With the emergence of a $p$-adic local Langlands program the case of a coefficient field of characteristic $p$ has become increasingly important. But in this case the connection  between representations and Hecke modules is rather weak. To rescue the situation one has to pass to the derived endomorphism algebra $H_G^\bullet$ of $\mathbf{X}$. For appropriate choices of $\mathbf{X}$ derived versions of the above functors even lead to an equivalence of triangulated categories between the derived category $D(G)$ of smooth $G$-representations and the derived category $D(H_G^\bullet)$ of the differential graded $k$-algebra $H_G^\bullet$ (cf.\ \cite{Sch}). Therefore the easily constructed parabolic induction $\Ind_P^G$ on the $D(G)$-side has to correspond to a specific functor on the $D(H_G^\bullet)$-side. This paper is about determining concretely this latter functor on the $H_G^\bullet$-side.

In the first section we will begin by reviewing the underived formalism of parabolic induction. We then will reformulate it on the Hecke side in such a way that it becomes apparent how to write down a derived candidate for the functor. In the second section we will show that our candidate on the $H_G^\bullet$-side indeed corresponds to the functor $\Ind_P^G$ on the derived representation theoretic side. Technically this will require the enhancement of the triangulated category $D(G)$ to a differential graded category. In the final section we discuss what is known about left and right adjoint functors of derived parabolic induction. They do exist, but their explicit computation remains an open problem.

\section{The setting}\label{sec:setting}

Let $k$ be any field of positive characteristic $p$. We fix a locally compact nonarchimedean field $\mathfrak{F}$ of residue characteristic $p$. Let $\mathbf{G}$ be a connected reductive group over $\mathfrak{F}$, and put $G := \mathbf{G}(\mathfrak{F})$. By $\Mod_k(G)$ we denote the Grothendieck abelian category of smooth representations of $G$ in $k$-vector spaces, and by $D(G)$ the corresponding unbounded derived category. For any $\mathfrak{F}$-parabolic subgroup $\mathbf{P} \subseteq \mathbf{G}$ with Levi factor $\mathbf{M}$ and $P := \mathbf{P}(\mathfrak{F})$ and $M := \mathbf{M}(\mathfrak{F})$ we have the usual parabolic induction functor
\begin{equation*}
  \Ind_P^G : \Mod_k(M) \longrightarrow \Mod_k(G) \ .
\end{equation*}
We recall its properties (\cite{Vig} Prop.\ 4.2 and Thm.\ 5.3):
\begin{itemize}
  \item[(A)] $\Ind_P^G$ is exact.
  \item[(B)] The functor of $N$-coinvariants $(-)_N : \Mod_k(G) \longrightarrow \Mod_k(M)$, where $N := \mathbf{N}(\mathfrak{F})$ for the unipotent radical $\mathbf{N}$ of $\mathbf{P}$, is left adjoint to $\Ind_P^G$.
  \item[(C)] $\Ind_P^G$ commutes with arbitrary direct sums, and hence has a right adjoint functor $R_P^G : \Mod_k(G) \longrightarrow \Mod_k(M)$.
  \item[(D)] $\Ind_P^G$ is fully faithful.
  \item[(E)] $\id \xrightarrow{\simeq} R_P^G \circ \Ind_P^G$ and $(-)_N \circ \Ind_P^G \xrightarrow{\simeq} \id$.
\end{itemize}
The property $(A)$ immediately implies that $\Ind_P^G$ extends to an exact functor
\begin{equation*}
  \Ind_P^G : D(M) \longrightarrow D(G)
\end{equation*}
between the unbounded derived categories.

Next we turn to the Hecke algebra side. We fix a maximal $\mathfrak{F}$-split torus $\mathbf{T} \subseteq \mathbf{P}$ as well as a minimal $\mathfrak{F}$-parabolic subgroup $\mathbf{T} \subseteq \mathbf{B} \subseteq \mathbf{P}$. We may view $\mathbf{M}$ in a unique way as a subgroup of $\mathbf{P}$ which contains $\mathbf{T}$. We also fix a special vertex in the apartment corresponding to $\mathbf{T}$ in the semisimple Bruhat-Tits building of $\mathbf{G}$. It is a vertex of a unique chamber in this apartment in the ``direction'' of $\mathbf{B}$. We denote by $I_G$ the pro-$p$-Sylow subgroup of the Iwahori subgroup corresponding to this chamber (called the pro-$p$ Iwahori subgroup of $G$). Then $I_M := M \cap I_G$ is a pro-$p$ Iwahori subgroup of $M$. We introduce the ``universal'' representations
\begin{equation*}
  \mathbf{X}_G := \ind_{I_G}^G(1)  \qquad\text{and}\qquad   \mathbf{X}_M := \ind_{I_M}^M(1)
\end{equation*}
in $\Mod_k(G)$ and $\Mod_k(M)$, respectively. The corresponding pro-$p$ Iwahori-Hecke algebras are
\begin{equation*}
  H_G := \End_{k[G]}(\mathbf{X}_G)^{\mathrm{op}}   \qquad\text{and}\qquad    H_M := \End_{k[M]}(\mathbf{X}_M)^{\mathrm{op}} \ .
\end{equation*}
In \cite{OV} \S2.5.2 a subalgebra $H_{M+} \subseteq H_M$ is defined (which depends on $P$) and an explicit embedding $H_{M+} \hookrightarrow H_G$ is constructed. Then they define (loc.\ cit. \S4.2.1) the parabolic induction functor in this setting by
\begin{align*}
  \Ind_{H_M}^{H_G} : \Mod(H_M) & \longrightarrow \Mod(H_G) \\
                            Y & \longmapsto H_G \otimes_{H_{M+}} Y \ .
\end{align*}
We need a more conceptual description of this functor. Note that obviously $H_G \otimes_{H_{M+}} Y = (H_G \otimes_{H_{M+}} H_M) \otimes_{H_M} Y$. Hence we have to understand the $(H_G,H_M)$-bimodule $H_G \otimes_{H_{M+}} H_M$. For this we use \cite{OV} Prop.\ 4.3 which says that the diagram
\begin{equation}\label{diag:Ind-invariants}
  \xymatrix{
    \Mod_k(M) \ar[d]_{\Ind_P^G}  \ar[rr]^{(-)^{I_M}} && \Mod(H_M) \ar[d]^{\Ind_{H_M}^{H_G}} \\
    \Mod_k(G) \ar[rr]^{(-)^{I_G}} &&   \Mod(H_G)  }
\end{equation}
is commutative (up to natural isomorphism). If we apply this to $\mathbf{X}_M$ in $\Mod_k(M)$ then we obtain isomorphisms of $(H_G,H_M)$-bimodules
\begin{equation*}
  H_G \otimes_{H_{M+}} H_M = \Ind_{H_M}^{H_G}((\mathbf{X}_M)^{I_M}) \cong \Ind_P^G(\mathbf{X}_M)^{I_G} \ .
\end{equation*}
In the following we abbreviate $\mathbf{X}_{G,P} := \Ind_P^G(\mathbf{X}_M)^{I_G}$, and we always use the functor $\Ind_{H_M}^{H_G}$ in the (naturally isomorphic) form
\begin{equation*}
  Y \longmapsto \mathbf{X}_{G,P} \otimes_{H_M} Y \ .
\end{equation*}
It is a standard fact (cf.\ \cite{CE} Prop.\ II.5.2) that this latter functor has the right adjoint functor
\begin{align*}
  \Mod(H_G) & \longrightarrow \Mod(H_M) \\
          Z & \longmapsto \Hom_{H_G}(\mathbf{X}_{G,P},Z) \ .
\end{align*}
We also quote from \cite{OV} Thm.\ 4.1 and Cor.\ 4.6 the following results:
\begin{itemize}
  \item[(F)] $\Ind_{H_M}^{H_G}$ is faithful and exact, or equivalently, $\mathbf{X}_{G,P}$ as an $H_M$-module is faithfully flat.
  \item[(G)] $\Ind_{H_M}^{H_G}$ has an exact left adjoint and, as a consequence, preserves injective objects.
  \item[(H)] The diagram
\begin{equation}\label{diag:Ind-tensor}
  \xymatrix{
    \Mod(H_M) \ar[d]_{\mathbf{X}_{G,P} \otimes_{H_M} -} \ar[rr]^{\mathbf{X}_M \otimes_{H_M} -} && \Mod_k(M) \ar[d]^{\Ind_P^G} \\
    \Mod(H_G) \ar[rr]^{\mathbf{X}_G \otimes_{H_G} -} && \Mod_k(G)   }
\end{equation}
is commutative (up to natural isomorphism).
\end{itemize}

To pass to the derived picture we fix injective resolutions $\mathbf{X}_G \xrightarrow{\simeq} \mathcal{I}_G^\bullet$ and $\mathbf{X}_M \xrightarrow{\simeq} \mathcal{I}_M^\bullet$ in $\Mod_k(G)$ and $\Mod_k(M)$, respectively. We consider the differential graded algebras
\begin{equation*}
  H_G^\bullet := \End_{\Mod_k(G)}^\bullet(\mathcal{I}_G^\bullet)^{\mathrm{op}}    \qquad\text{and}\qquad    H_M^\bullet := \End_{\Mod_k(M)}^\bullet(\mathcal{I}_M^\bullet)^{\mathrm{op}}
\end{equation*}
and their derived categories $D(H_G^\bullet)$ and $D(H_M^\bullet)$, respectively (compare \cite{Sch} \S3). A first guess, based upon the definition of $\mathbf{X}_{G,P}$, would be to consider the $(H_G^\bullet,H_M^\bullet)$-bimodule
\begin{equation*}
   \mathcal{I}_{G,P}^\bullet := \Hom_{\Mod_k(G)}^\bullet (\mathcal{I}_G^\bullet, \Ind_P^G( \mathcal{I}_M^\bullet)) \ .
\end{equation*}
It gives rise to the pair of exact functors between triangulated categories
\begin{align*}
  \mathcal{I}_{G,P}^\bullet \otimes_{H_M^\bullet}^{\mathbf{L}} - : D(H_M^\bullet) & \longrightarrow D(H_G^\bullet) \\
                                                     Y^\bullet & \longmapsto \mathbf{q} (\mathcal{I}_{G,P}^\bullet \otimes_{H_M^\bullet} \mathbf{p} Y^\bullet)
\end{align*}
and
\begin{align*}
  \mathbf{R}\Hom_{H_G^\bullet} (\mathcal{I}_{G,P}^\bullet,-) : D(H_G^\bullet) & \longrightarrow D(H_M^\bullet) \\
  Z^\bullet & \longmapsto \mathbf{q} (\Hom_{H_G^\bullet}^\bullet (\mathcal{I}_{G,P}^\bullet,\mathbf{i} Z^\bullet)) \ ,
\end{align*}
the first one being left adjoint to the second one (cf.\ \cite{Kel} \S2.6). Here $K(H_M^\bullet)$ denotes the homotopy category of $H_M^\bullet$-modules and $\mathbf{p} : D(H_M^\bullet) \longrightarrow K(H_M^\bullet)$, resp.\ $\mathbf{i} : D(H_M^\bullet) \longrightarrow K(H_M^\bullet)$, is a fully faithful left, resp.\ right, adjoint of the quotient functor $\mathbf{q} : K(H_M^\bullet) \longrightarrow D(H_M^\bullet)$ such that the adjunction morphism $\mathbf{p} \circ \mathbf{q} (?) \xrightarrow{\simeq} \id(?)$, resp.\ $\id(?) \xrightarrow{\simeq} \mathbf{i} \circ \mathbf{q} (?)$, is a h-projective, resp.\ h-injective,  resolution of $?$ (cf.\ \cite{Kel} \S1.2 and \S2.5).

At this point we impose the \textbf{assumption} that the pro-$p$ group $I_G$ is $p$-torsionfree. (This forces $\mathfrak{F}$ to have characteristic zero.) Of course, $I_M$ then is $p$-torsionfree as well. By \cite{Sch} Thm.\ 3 we then have the equivalence of triangulated categories
\begin{align}\label{f:equiv}
  h_G : D(G) & \xrightarrow{\;\simeq\;} D(H_G^\bullet) \\
   V^\bullet & \longmapsto \mathbf{q} (\Hom_{\Mod_k(G)}^\bullet (\mathcal{I}_G^\bullet, \mathbf{i} V^\bullet)) \ ,   \nonumber
\end{align}
for which a quasi-inverse functor is given by
\begin{align*}
  t_G : D(H_G^\bullet) & \xrightarrow{\;\simeq\;} D(G) \\
  Z^\bullet & \longmapsto \mathbf{q} (\mathcal{I}_G^\bullet \otimes_{H_G^\bullet} \mathbf{p} Z^\bullet) \ .
\end{align*}
In the same way we have the quasi-inverse equivalences $h_M$ and $t_M$. Here, similarly as before, $K(G)$ denotes the homotopy category of unbounded complexes in $\Mod_k(G)$ and $\mathbf{i} : D(G) \longrightarrow K(G)$ is a fully faithful right adjoint of the quotient functor $\mathbf{q} : K(G) \longrightarrow D(G)$ such that the adjunction morphism $\id(?) \xrightarrow{\simeq} \mathbf{i} \circ \mathbf{q} (?)$ is a h-injective resolution of $?$ (\cite{Sch} \S2).

Our goal is to show that under these equivalences the functor $\Ind_P^G$ corresponds to the functor $\mathcal{I}_{G,P}^\bullet \otimes_{H_M^\bullet}^{\mathbf{L}} -$. But one quickly realizes that this can not be true with the above naive definition of $\mathcal{I}_{G,P}^\bullet$. One needs to replace $\Ind_P^G( \mathcal{I}_M^\bullet)$ by an injective resolution and change the definition to $\mathcal{I}_{G,P}^\bullet := \Hom_{\Mod_k(G)}^\bullet (\mathcal{I}_G^\bullet, \mathbf{i}\Ind_P^G( \mathcal{I}_M^\bullet))$. This runs into the problem, though, that $H_M^\bullet$ might no longer act on $\mathbf{i}\Ind_P^G( \mathcal{I}_M^\bullet)$. To make a good choice of the functor $\mathbf{i}$ we have to lift the setting to dg-categories.

\section{The enhanced setting}\label{sec:enhanced-set}

\subsection{The enhancement of $D(G)$}\label{subsec:enhanc-DG}

We let $C(G)$ denote the Grothendieck abelian category of unbounded cochain complexes in $\Mod_k(G)$. This category can be enriched to the dg-category $\underline{C}(G)$ with the same objects as $C(G)$ but with morphisms the cochain complexes of $k$-vector spaces $\Hom_{\Mod_k(G)}^\bullet(V_1^\bullet,V_2^\bullet)$. The category $K(G)$ introduced earlier, is the homotopy category of $\underline{C}(G)$.

According to \cite{HA} Prop.\ 1.3.5.3 the category $C(G)$ has a left proper combinatorial (closed) model structure for which
\begin{itemize}
  \item[--] the cofibrations are the monomorphisms,
  \item[--] the weak equivalences are the quasi-isomorphisms, and
  \item[--] the fibrations are the morphisms which have the right lifting property with respect to all trivial cofibrations.
\end{itemize}
It is called the injective model structure of $C(G)$.

\begin{proposition}\label{inj-model}
  A morphism in $C(G)$ is a fibration if and only if it is an epimorphism with an h-injective and degreewise injective kernel.
\end{proposition}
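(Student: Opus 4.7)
The plan is to first prove the sub-claim that $K^\bullet \in C(G)$ is fibrant iff $K^\bullet$ is h-injective and each $K^n$ is injective in $\Mod_k(G)$, and then to bootstrap to the full proposition via a standard retract-and-factorization argument.

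For the sub-claim, the ($\Leftarrow$) direction is the usual \enquote{lift degreewise, then correct by homotopy}. Given a trivial cofibration $A^\bullet \hookrightarrow B^\bullet$ with (automatically acyclic) cokernel $C^\bullet$ and a chain map $\alpha : A^\bullet \to K^\bullet$, degreewise injectivity of the $K^n$ extends $\alpha$ to a graded map $\tilde\gamma : B^\bullet \to K^\bullet$; the defect $d_K \tilde\gamma - \tilde\gamma d_B$ vanishes on $A^\bullet$, factors through $C^\bullet$, and is a degree-one cocycle in $\Hom^\bullet(C^\bullet, K^\bullet)$. This complex is acyclic by h-injectivity of $K^\bullet$ and acyclicity of $C^\bullet$, so the defect is a coboundary and an appropriate correction to $\tilde\gamma$ furnishes the desired chain extension. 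For ($\Rightarrow$), I would test fibrancy against two families of trivial cofibrations: the disc-complex inclusions $D^n(A) \hookrightarrow D^n(B)$ — where $D^n(V)$ denotes the acyclic complex $V \xrightarrow{\id} V$ in degrees $n-1, n$ — detect injectivity of $K^{n-1}$ for each mono $A \hookrightarrow B$ in $\Mod_k(G)$; and for any chain map $f : C^\bullet \to K^\bullet$ out of an acyclic $C^\bullet$, the inclusion $K^\bullet \hookrightarrow \operatorname{Cone}(f)$ (a monomorphism with acyclic cokernel $C^\bullet[1]$) is a trivial cofibration, and a filler of $\id_{K^\bullet}$ produces a contracting homotopy of $f$.

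For the full proposition let $\mathcal{F}$ be the class of epimorphisms with h-injective, degreewise injective kernel, which is visibly retract-closed (retracts of epis are epis, kernels of retracts are retracts of kernels, and retracts of fibrant objects are fibrant). The heart of the argument is to verify that every $f \in \mathcal{F}$ has the right lifting property against each trivial cofibration $i : A^\bullet \hookrightarrow B^\bullet$. Forming the pullback $P^\bullet := B^\bullet \times_{Y^\bullet} X^\bullet$ reduces this to producing a section of $P^\bullet \twoheadrightarrow B^\bullet$ extending the canonical $A^\bullet \to P^\bullet$, for the short exact sequence $0 \to K^\bullet \to P^\bullet \to B^\bullet \to 0$ with $K^\bullet := \ker f$. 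Since each $K^n$ is injective, applying $\Hom(-, K^\bullet)$ to the short exact sequence $0 \to A^\bullet \to B^\bullet \to C^\bullet \to 0$ (with $C^\bullet := B^\bullet/A^\bullet$ acyclic) yields a short exact sequence
\begin{equation*}
  0 \to \Hom^\bullet(C^\bullet, K^\bullet) \to \Hom^\bullet(B^\bullet, K^\bullet) \to \Hom^\bullet(A^\bullet, K^\bullet) \to 0
\end{equation*}
whose first term is acyclic by h-injectivity of $K^\bullet$ and acyclicity of $C^\bullet$. The restriction map is therefore a surjective quasi-isomorphism, and a standard cocycle lift-and-correct computation — using degreewise injectivity again to supply a graded splitting $P^\bullet \cong K^\bullet \oplus B^\bullet$ — produces the required section.

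Conversely, if $f : X^\bullet \to Y^\bullet$ is a fibration, then $\ker f \to 0$ is the pullback of $f$ along $0 \to Y^\bullet$, so $\ker f$ is fibrant. To see that $f$ is itself an epimorphism I would use a factorization $f = q \circ j$ with $j$ a trivial cofibration and $q \in \mathcal{F}$ (produced by the small object argument from a suitable generating set of trivial cofibrations, with $q$ surjective by construction); by the previous paragraph $q$ is a fibration, so the lifting property of $f$ against $j$ exhibits $f$ as a retract of $q$, and retracts of epimorphisms are epimorphisms. The main technical obstacle is the lifting argument for $\mathcal{F}$: one must fuse degreewise injectivity (to split $0 \to K^\bullet \to P^\bullet \to B^\bullet \to 0$ in each degree and to preserve exactness of $\Hom(-, K^\bullet)$ on the sequence arising from $i$) with h-injectivity (to upgrade acyclicity of $C^\bullet$ to acyclicity of $\Hom^\bullet(C^\bullet, K^\bullet)$ and thus to a surjective quasi-isomorphism after restriction). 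The remaining ingredients — pullback stability, the retract argument, and the small-object factorization — are routine.
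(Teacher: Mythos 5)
The paper itself does not argue this proposition at all: it simply cites \cite{Gil}, Cor.\ 7.1. So your self-contained proof is necessarily a different route, and most of it is sound. The fibrant-object characterization (disc complexes $D^n(A)\hookrightarrow D^n(B)$ for degreewise injectivity, cones $K^\bullet\hookrightarrow\operatorname{Cone}(f)$ for h-injectivity, and the lift-and-correct extension for the converse), the reduction of the lifting problem to a section of $0\to K^\bullet\to P^\bullet\to B^\bullet\to 0$ via the pullback, and the construction of that section by combining a degreewise splitting with the acyclicity of $\Hom^\bullet(C^\bullet,K^\bullet)$ are all correct and standard; pullback stability and the retract observations are fine as well.

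There is, however, one genuine gap: the step showing that a fibration $f$ is an epimorphism. You invoke a factorization $f=q\circ j$ ``produced by the small object argument from a suitable generating set of trivial cofibrations, with $q$ surjective by construction'' and with $q\in\mathcal{F}$. In the injective model structure the generating trivial cofibrations are only known to exist abstractly (this is exactly what combinatoriality, i.e.\ \cite{HA} Prop.\ 1.3.5.3, provides); they have no explicit description, and nothing in the small object argument makes the resulting $q$ surjective, let alone a member of $\mathcal{F}$ --- justifying that would essentially presuppose the characterization you are proving, so as written the step is circular. The repair is easy and makes the whole factorization-and-retract detour unnecessary: for any $V$ in $\Mod_k(G)$ and any $n$, the map $0\to D^n(V)$ is a monomorphism with acyclic cokernel, hence a trivial cofibration, and $\Hom_{C(G)}(D^n(V),X^\bullet)\cong\Hom_{\Mod_k(G)}(V,X^{n-1})$ naturally in $X^\bullet$. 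Applying the lifting property of the fibration $f:X^\bullet\to Y^\bullet$ against $0\to D^n(Y^{n-1})$ to the map corresponding to $\id_{Y^{n-1}}$ produces $\phi:Y^{n-1}\to X^{n-1}$ with $f^{n-1}\phi=\id$, so every fibration is even a degreewise split epimorphism. With that substitution your argument is complete and gives an elementary alternative to the reference to Gillespie.
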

\begin{proof}
See \cite{Gil} Cor.\ 7.1.
\end{proof}

Let $\underline{I}(G)$ denote the full dg-subcategory of $\underline{C}(G)$ whose objects are fibrant. The crucial technical fact which we will be using is the following special case of a theorem of Schn\"urer.

\begin{theorem}\label{enriched-replac}
  There exists a dg-functor $\underline{i} : \underline{C}(G) \longrightarrow \underline{I}(G)$ (called the fibrant replacement functor) and a natural transformation $\phi : \id \rightarrow \mathrm{inclusion} \circ \underline{i}$ such that $\phi_{V^\bullet} : V^\bullet \rightarrow \underline{i}(V^\bullet)$, for any $V^\bullet$ in $C(G)$, is a trivial cofibration.
\end{theorem}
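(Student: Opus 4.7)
The plan is to derive the statement as a direct application of Schnürer's general theorem on functorial fibrant replacement in combinatorial dg-model categories. By the result from HA cited above, $C(G)$ carries the injective model structure, which is left proper and combinatorial, and Proposition \ref{inj-model} gives an explicit description of its fibrations. What Schnürer requires in addition is that this model structure be compatible with the given dg-enrichment in a suitable sense, and then his theorem produces a fibrant replacement that is itself a dg-functor.

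First I would equip $\underline{C}(G)$ with the enriched structure needed to apply Schnürer's framework. The dg-category $\underline{C}(G)$ is tensored and cotensored over the dg-category of cochain complexes of $k$-vector spaces via the usual constructions $V^\bullet \otimes_k W^\bullet$ (with $G$ acting through the first factor) and the corresponding internal hom. Next, I would check the pushout-product axiom relating the injective model structure on $C(G)$ to the projective model structure on complexes of $k$-vector spaces. Since cofibrations in $C(G)$ are precisely the monomorphisms, this reduces to a straightforward verification using the characterization of (trivial) fibrations in Proposition \ref{inj-model} and the standard fact that tensoring a monomorphism with a cofibrant complex of $k$-vector spaces produces a monomorphism, which is moreover a quasi-isomorphism as soon as either factor is one.

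With compatibility established, Schnürer's enriched small object argument produces the desired dg-functor $\underline{i}$ together with the natural transformation $\phi$. Concretely, one fixes a small set of generating trivial cofibrations (whose existence follows from combinatoriality) and constructs $\underline{i}(V^\bullet)$ as a transfinite composition of pushouts along these generators. The tensoring over complexes of $k$-vector spaces ensures that each attaching step is compatible with the dg-enrichment, so the resulting colimit assembles into a genuine dg-functor; each component $\phi_{V^\bullet}$ is a transfinite composition of trivial cofibrations and is therefore itself a trivial cofibration.

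The main obstacle is the passage from an ordinary functorial fibrant replacement, which the classical small object argument produces for free in any combinatorial model category, to a dg-functorial one. This enhancement is not automatic: it relies precisely on the compatibility between the injective model structure on $C(G)$ and the dg-enrichment encoded in the pushout-product axiom. Once that compatibility is verified, Schnürer's theorem supplies the statement with no further work, and the proof reduces to a citation.
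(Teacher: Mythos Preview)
Your proposal is correct and follows essentially the same route as the paper: both reduce the statement to an application of Schn\"urer's theorem (\cite{Schn} Thm.~4.3) after verifying its hypotheses for $\underline{C}(G)$. The paper's check is somewhat leaner than yours---it observes that $\Mod_k(G)$ is a complete Grothendieck abelian category and that $\underline{C}(G)$ is tensored and cotensored (using that complexes of $k$-vector spaces sit inside $C(G)$ as complexes with trivial $G$-action), then cites Schn\"urer directly; the pushout-product compatibility you propose to verify separately is already handled within Schn\"urer's framework for Grothendieck abelian categories, so you need not spell it out.
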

\begin{proof}
We recall from \cite{Sch} Lemma 1 that $\Mod_k(G)$ is a complete Grothendieck abelian category. In particular, the abelian category $C(G)$ is complete and cocomplete. On the other hand, the category of  cochain complexes of $k$-vector spaces can be viewed as the full subcategory of those complexes in $C(G)$ whose terms carry the trivial $G$-action. Using this observation  one easily shows that $\underline{C}(G)$ is tensored and cotensored in the sense of \cite{Schn} 3.2.5. Hence $\Mod_k(G)$ satisfies the assumptions of \cite{Schn} Thm. 4.3.
\end{proof}

\subsection{The commutative diagrams}\label{subsec:derived-comm}

We choose a fibrant replacement functor $\underline{i}$ as in Thm.\ \ref{enriched-replac} and finally introduce the correct $(H_G^\bullet,H_M^\bullet)$-bimodule
\begin{equation*}
   \mathcal{I}_{G,P}^\bullet := \Hom_{\Mod_k(G)}^\bullet (\mathcal{I}_G^\bullet, \underline{i} \Ind_P^G( \mathcal{I}_M^\bullet)) \ .
\end{equation*}

\begin{proposition}\label{derived-comm}
The diagrams
\begin{equation*}
  \xymatrix{
    D(M) \ar[d]_{\Ind_P^G}  \ar[rr]^{h_M} && D(H_M^\bullet) \ar[d]^{\mathcal{I}_{G,P}^\bullet \otimes_{H_M^\bullet}^{\mathbf{L}} -} \\
    D(G) \ar[rr]^{h_G} &&   D(H_G^\bullet)  }
\end{equation*}
and
\begin{equation*}
  \xymatrix{
    D(H_M^\bullet) \ar[d]_{\mathcal{I}_{G,P}^\bullet \otimes_{H_M^\bullet}^{\mathbf{L}} -} \ar[rr]^{t_M} && D(M) \ar[d]^{\Ind_P^G} \\
    D(H_G^\bullet) \ar[rr]^{t_G} && D(G)   }
\end{equation*}
are commutative (up to natural isomorphism).
\end{proposition}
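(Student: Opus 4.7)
I would establish the second diagram directly and deduce the first from it by conjugating with the quasi-inverse equivalences $(t_G,h_G)$ and $(t_M,h_M)$. The point of the enhancement from Theorem~\ref{enriched-replac} is precisely that the bimodule $\mathcal{I}_{G,P}^\bullet$ carries a genuine right $H_M^\bullet$-action obtained by dg-functoriality of $\Ind_P^G$ and $\underline{i}$ from the $H_M^\bullet$-action on $\mathcal{I}_M^\bullet$, so bimodule compatibility is built in rather than having to be repaired later.

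\textbf{Key identification.} The heart of the argument is a natural quasi-isomorphism
\[
  \mathcal{I}_G^\bullet \otimes_{H_G^\bullet}^{\mathbf{L}} \mathcal{I}_{G,P}^\bullet \;\cong\; \Ind_P^G(\mathcal{I}_M^\bullet)
\]
in $D(G)$, compatible with the right $H_M^\bullet$-action on both sides. Indeed, by Proposition~\ref{inj-model} the object $\underline{i}\Ind_P^G(\mathcal{I}_M^\bullet)$ is h-injective and, via $\phi$, quasi-isomorphic to $\Ind_P^G(\mathcal{I}_M^\bullet)$; hence $\mathcal{I}_{G,P}^\bullet$ represents $h_G(\Ind_P^G(\mathcal{I}_M^\bullet))$ in $D(H_G^\bullet)$. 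Applying the quasi-inverse $t_G$ yields the claim. At the chain level, the isomorphism is realized by the evaluation morphism $\mathcal{I}_G^\bullet \otimes_{H_G^\bullet} \mathbf{p}\mathcal{I}_{G,P}^\bullet \to \underline{i}\Ind_P^G(\mathcal{I}_M^\bullet)$ followed by $\phi^{-1}$, and each of these maps is $H_M^\bullet$-equivariant because $\underline{i}$ and $\Ind_P^G$ are dg-functors.

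\textbf{Conclusion of the two diagrams.} For $Z^\bullet \in D(H_M^\bullet)$ I would then use associativity of the derived tensor product to write
\[
  t_G\bigl(\mathcal{I}_{G,P}^\bullet \otimes_{H_M^\bullet}^{\mathbf{L}} Z^\bullet\bigr)
  \;\cong\; \bigl(\mathcal{I}_G^\bullet \otimes_{H_G^\bullet}^{\mathbf{L}} \mathcal{I}_{G,P}^\bullet\bigr) \otimes_{H_M^\bullet}^{\mathbf{L}} Z^\bullet
  \;\cong\; \Ind_P^G(\mathcal{I}_M^\bullet) \otimes_{H_M^\bullet}^{\mathbf{L}} Z^\bullet .
\]
Since $\Ind_P^G$ is exact by (A) and commutes with arbitrary direct sums by (C), I can pull $\Ind_P^G$ out of the derived tensor product: choosing an h-projective resolution $\mathbf{p}Z^\bullet \to Z^\bullet$ over $H_M^\bullet$, the underived identity $\Ind_P^G(\mathcal{I}_M^\bullet) \otimes_{H_M^\bullet} \mathbf{p}Z^\bullet = \Ind_P^G(\mathcal{I}_M^\bullet \otimes_{H_M^\bullet} \mathbf{p}Z^\bullet)$ holds at the chain level, and the right hand side computes $\Ind_P^G(t_M(Z^\bullet))$. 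This gives the second diagram. The first follows formally: composing $\Ind_P^G \circ t_M \cong t_G \circ (\mathcal{I}_{G,P}^\bullet \otimes_{H_M^\bullet}^{\mathbf{L}} -)$ on the right with $h_M$ and on the left with $h_G$, and invoking $h_G t_G \cong \id$ together with $t_M h_M \cong \id$, yields $h_G \circ \Ind_P^G \cong (\mathcal{I}_{G,P}^\bullet \otimes_{H_M^\bullet}^{\mathbf{L}} -) \circ h_M$.

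\textbf{Main obstacle.} The principal technical difficulty is making the associativity step and the key identification rigorous at the chain level while preserving the $H_M^\bullet$-action, because none of $\mathcal{I}_G^\bullet$, $\mathcal{I}_M^\bullet$, or $\mathcal{I}_{G,P}^\bullet$ is a priori h-projective or h-flat on the relevant side. This is precisely what the dg-enhancement from Section~\ref{subsec:enhanc-DG} is designed to handle: using the dg-functor $\underline{i}$ throughout ensures that $\phi$ and the evaluation maps remain $H_M^\bullet$-equivariant on the nose, and once all constructions are made at the chain level a single h-projective replacement $\mathbf{p}Z^\bullet$ over $H_M^\bullet$ suffices to turn the whole chain of identifications into a genuine zigzag of quasi-isomorphisms.
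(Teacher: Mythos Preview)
Your strategy---prove the second diagram by direct computation via associativity of derived tensor products, then deduce the first---is a genuine alternative to the paper's. The paper goes the other way: it attacks the \emph{first} diagram, constructs an explicit natural transformation $\tau\colon(\mathcal{I}_{G,P}^\bullet\otimes_{H_M^\bullet}^{\mathbf{L}}-)\circ h_M\longrightarrow h_G\circ\Ind_P^G$, and then invokes infinite d\'evissage: since both composites are exact and commute with direct sums and $D(M)$ has the compact generator $\mathbf{X}_M\simeq\mathcal{I}_M^\bullet$, it suffices to check that $\tau_{\mathbf{X}_M}$ is an isomorphism, which unwinds to the trivial cofibration $\phi$ for $\underline{i}\Ind_P^G(\mathbf{X}_M)$. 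Your approach is computational and object-by-object; the paper's is categorical and reduces everything to a single generator, thereby avoiding almost all chain-level bookkeeping.

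There is, however, a gap in your execution. The claim that ``a single h-projective replacement $\mathbf{p}Z^\bullet$ over $H_M^\bullet$ suffices'' is too optimistic. Your key identification $\mathcal{I}_G^\bullet\otimes_{H_G^\bullet}^{\mathbf{L}}\mathcal{I}_{G,P}^\bullet\cong\Ind_P^G(\mathcal{I}_M^\bullet)$ already needs a resolution on the $H_G^\bullet$-side, and for the associativity step to go through while retaining the right $H_M^\bullet$-action you must resolve $\mathcal{I}_{G,P}^\bullet$ as an $(H_G^\bullet,H_M^\bullet)$-\emph{bimodule} (over the field $k$, bimodule h-projectives restrict to h-projectives on each side, so the counit of $t_G h_G$ is then visible on the nose and remains $H_M^\bullet$-equivariant). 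Without this, the underived evaluation $\mathcal{I}_G^\bullet\otimes_{H_G^\bullet}\mathcal{I}_{G,P}^\bullet\to\underline{i}\Ind_P^G(\mathcal{I}_M^\bullet)$, after tensoring with $\mathbf{p}Z^\bullet$, has no reason to compute $t_G$ of anything. Once you insert the bimodule resolution your argument does go through; alternatively, you could note that both sides of the second diagram are coproduct-preserving exact functors out of $D(H_M^\bullet)$ and check the comparison on the generator $H_M^\bullet$---which is essentially the paper's method transported to the other diagram.
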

\begin{proof}
The commutativity of the second diagram is a consequence of the commutativity of the first one (and vice versa). In order to treat the first diagram we will make use of the following general principle.

Let $\mathcal{S}$ and $\mathcal{T}$ be two triangulated categories with arbitrary direct sums and such that $\mathcal{S}$ has a compact generator $C$. Let $F_i : \mathcal{S} \longrightarrow \mathcal{T}$, for $i = 0, 1$, be two exact functors which respect arbitrary direct sums. Finally let $\tau : F_1 \longrightarrow F_2$ be a natural transformation. Then $\tau$ is a natural isomorphism if and only if $\tau_C : F_1(C) \longrightarrow F_2(C)$ is an isomorphism.

We briefly recall the argument (\cite{Kel} \S1.4 last Remark b)). Let $\mathcal{S}_0$ be the full subcategory of those objects $Y$ in $\mathcal{S}$ for which $\tau_Y$ is an isomorphism. Then $\mathcal{S}_0$ is a strictly full triangulated subcategory of $\mathcal{S}$ which contains the compact generator $C$ and is closed under the formation of arbitrary direct sums. Hence, by the principle of infinite d\'evissage, it must be equal to $\mathcal{S}$.

This is applicable in our situation. First of all we note that the quotient functor $\mathbf{q}$ in both cases commutes with arbitrary direct sums (cf.\ \cite{KS} \S14.3. first paragraph). The derived category $D(M)$ has arbitrary direct sums and the compact generator $\mathbf{X}_M \xrightarrow{\simeq} \mathcal{I}_M^\bullet$ by \cite{Sch} Remark 2, Lemma 4, and Prop.\ 6. The derived category $D(H_G^\bullet)$ has arbitrary direct sums (cf.\ \cite{Kel} \S2.5). The functor $\Ind_M^G$ commutes with arbitrary direct sums as a consequence of property (C). Since arbitrary direct sums of $K$-projective objects are $K$-projective and since the tensor project commutes with arbitrary direct sums, the same holds for the derived tensor product $\mathcal{I}_{G,P}^\bullet \otimes_{H_M^\bullet}^{\mathbf{L}} -$. The functors $h_M$ and $h_G$ commute with arbitrary direct sums as a consequence of \cite{Sch} Lemma 3.

The functors in question are
\begin{align*}
  F_1(-) & := \mathbf{q} (\mathcal{I}_{G,P}^\bullet \otimes_{H_M^\bullet} \mathbf{p} \mathbf{q} \Hom_{\Mod_k(M)}^\bullet(\mathcal{I}_M^\bullet, \underline{i} -)) \\
         & = \mathbf{q} (\Hom_{\Mod_k(G)}^\bullet (\mathcal{I}_G^\bullet, \underline{i} \Ind_P^G( \mathcal{I}_M^\bullet)) \otimes_{H_M^\bullet} \mathbf{p} \mathbf{q} \Hom_{\Mod_k(M)}^\bullet(\mathcal{I}_M^\bullet, \underline{i} -))
\end{align*}
and
\begin{equation*}
  F_2(-) := \mathbf{q} \Hom_{\Mod_k(G)}^\bullet (\mathcal{I}_G^\bullet, \underline{i} \Ind_P^G(-)) \ .
\end{equation*}
The natural transformation $\tau : F_1 \longrightarrow F_2$ is obtained by combining the following natural transformations:
\begin{itemize}
  \item $F_1(-) \longrightarrow  \mathbf{q} (\mathcal{I}_{G,P}^\bullet \otimes_{H_M^\bullet} \Hom_{\Mod_k(M)}^\bullet(\mathcal{I}_M^\bullet, \underline{i} -))$ induced by the adjunction $\mathbf{p} \mathbf{q} \longrightarrow \id$;
  \item the image under $\mathbf{q}$ of the natural pairing
\begin{align*}
  \Hom_{\Mod_k(G)}^\bullet (\mathcal{I}_G^\bullet, \underline{i}\Ind_P^G( \mathcal{I}_M^\bullet)) \otimes_{H_M^\bullet} \Hom_{\Mod_k(M)}^\bullet(\mathcal{I}_M^\bullet, \underline{i} -) & \longrightarrow \Hom_{\Mod_k(G)}^\bullet (\mathcal{I}_G^\bullet, \underline{i} \Ind_P^G(\underline{i} -)) \\
   \alpha \otimes \beta & \longmapsto \underline{i}(\Ind_P^G(\beta) \circ \alpha) \ ;
\end{align*}
  \item the inverse of the image under $\mathbf{q}$ of the homotopy equivalence
\begin{equation*}
  \Hom_{\Mod_k(G)}^\bullet (\mathcal{I}_G^\bullet, \underline{i} \Ind_P^G( -)) \longrightarrow \Hom_{\Mod_k(G)}^\bullet (\mathcal{I}_G^\bullet, \underline{i} \Ind_P^G(\underline{i} -))  \ :
\end{equation*}
Note that by Thm. \ref{enriched-replac}, we have a natural transformation $\id \to \underline{i}$. Applying $\underline{i} \Ind_P^G$ yields the natural transformation
\begin{equation*}
  \underline{i} \Ind_P^G (-) \longrightarrow \underline{i} \Ind_P^G (\underline{i} -) \ .
\end{equation*}
Evaluated on objects, it becomes a quasi-isomorphism between fibrant objects and hence a homotopy equivalence. Applying $\mathbf{q} \circ \Hom_{\Mod_k(G)}^\bullet (\mathcal{I}_G^\bullet, -)$ therefore is a natural isomorphism.
\end{itemize}
On the compact generator $\mathbf{X}_M$ this simplifies to the inverse of the image under $\mathbf{q}$ of the homotopy equivalence
\begin{equation*}
   \Hom_{\Mod_k(G)}^\bullet (\mathcal{I}_G^\bullet, \underline{i} \Ind_P^G( \mathbf{X}_M^\bullet)) \longrightarrow  \Hom_{\Mod_k(G)}^\bullet (\mathcal{I}_G^\bullet, \underline{i} \Ind_P^G( \mathcal{I}_M^\bullet)) = \mathcal{I}_{G,P}^\bullet
\end{equation*}
induced by the trivial cofibration $\mathbf{X}_M \rightarrow \underline{i} \mathbf{X}_M = \mathcal{I}_M^\bullet$.
\end{proof}

\section{Adjoint functors}

\subsection{The right adjoints}\label{subsec:right-adjoints}

Since $\Ind_P^G : \Mod_k(M) \longrightarrow \Mod_k(G)$ commutes with arbitrary direct sums it has a right adjoint functor $R_P^G : \Mod_k(G) \longrightarrow \Mod_k(M)$. Moreover, being a right adjoint the functor $R_P^G$ necessarily is left exact. In particular, its right derived functors exist. In fact, by \cite{KS} Thm.\ 14.3.1, the functor $\Ind_P^G : D(M) \longrightarrow D(G)$ has a right adjoint functor $\mathbf{R}R_P^G : D(G) \longrightarrow D(M)$. It is, of course, the total derived functor of $R_P^G$.

On the other hand, we have recalled already (cf.\ \cite{Kel} \S2.6) that the functor  $\mathcal{I}_{G,P}^\bullet \otimes_{H_M^\bullet}^{\mathbf{L}} - : D(H_M^\bullet)  \longrightarrow D(H_G^\bullet)$ has the right adjoint functor $\mathbf{R}\Hom_{H_G^\bullet} (\mathcal{I}_{G,P}^\bullet,-) : D(H_G^\bullet) \longrightarrow D(H_M^\bullet)$.

\begin{corollary}\label{right-adj-comm}
The diagrams
\begin{equation*}
  \xymatrix{
    D(G) \ar[d]_{\mathbf{R}R_P^G}  \ar[rr]^{h_G} && D(H_G^\bullet) \ar[d]^{\mathbf{R}\Hom_{H_G^\bullet} (\mathcal{I}_{G,P}^\bullet,-)} \\
    D(M) \ar[rr]^{h_M} &&   D(H_M^\bullet)  }
\end{equation*}
and
\begin{equation*}
  \xymatrix{
    D(H_G^\bullet) \ar[d]_{\mathbf{R}\Hom_{H_G^\bullet} (\mathcal{I}_{G,P}^\bullet,-)} \ar[rr]^{t_G} && D(G) \ar[d]^{\mathbf{R}R_P^G} \\
    D(H_M^\bullet) \ar[rr]^{t_M} && D(M)   }
\end{equation*}
are commutative (up to natural isomorphism).
\end{corollary}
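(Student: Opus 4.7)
The plan is to derive both diagrams formally from Proposition \ref{derived-comm} using uniqueness (up to natural isomorphism) of right adjoints; no further computation is needed. Since $h_G$ and $t_G$ are quasi-inverse equivalences of triangulated categories, $t_G$ is simultaneously a left and a right adjoint of $h_G$, and likewise $t_M$ is both a left and a right adjoint of $h_M$. Combined with the two adjunctions $\Ind_P^G \dashv \mathbf{R}R_P^G$ (recalled just before the corollary via \cite{KS} Thm.\ 14.3.1) and $\mathcal{I}_{G,P}^\bullet \otimes_{H_M^\bullet}^{\mathbf{L}} - \dashv \mathbf{R}\Hom_{H_G^\bullet}(\mathcal{I}_{G,P}^\bullet,-)$, every functor appearing in the two diagrams of Proposition \ref{derived-comm} is part of an adjoint pair, so we may pass term-by-term to right adjoints.

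Concretely, the second diagram of Proposition \ref{derived-comm} supplies a natural isomorphism $t_G \circ (\mathcal{I}_{G,P}^\bullet \otimes_{H_M^\bullet}^{\mathbf{L}} -) \simeq \Ind_P^G \circ t_M$. Taking right adjoints on each side yields $\mathbf{R}\Hom_{H_G^\bullet}(\mathcal{I}_{G,P}^\bullet,-) \circ h_G \simeq h_M \circ \mathbf{R}R_P^G$, which is exactly the first diagram of the corollary. The same manoeuvre applied to the first diagram of Proposition \ref{derived-comm} gives $\mathbf{R}R_P^G \circ t_G \simeq t_M \circ \mathbf{R}\Hom_{H_G^\bullet}(\mathcal{I}_{G,P}^\bullet,-)$, i.e.\ the second diagram. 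As in the proof of Proposition \ref{derived-comm}, either of the two diagrams of the corollary implies the other by composing on the appropriate side with $h_G, t_G$ (resp.\ $h_M, t_M$), so only one passage to adjoints is really required.

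There is no serious obstacle here. The one point that deserves a moment of care is confirming that $\mathbf{R}R_P^G$ is genuinely a right adjoint of $\Ind_P^G : D(M) \to D(G)$ (and not merely a total right derived functor in some weaker abstract sense), but this is ensured by \cite{KS} Thm.\ 14.3.1 together with property (C), as already recorded in the paragraph preceding the corollary. Given this, the corollary is a purely formal consequence of Proposition \ref{derived-comm}.
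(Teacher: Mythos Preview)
Your argument is correct and is exactly the approach the paper takes: the paper's proof consists of the single sentence that this is a purely formal consequence of Proposition~\ref{derived-comm} and the equivalences~\eqref{f:equiv}. Your write-up merely spells out what ``purely formal'' means here (passing to right adjoints, using that a quasi-inverse of an equivalence is a two-sided adjoint), so there is nothing to add.
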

\begin{proof}
This is a purely formal consequence of Prop.\ \ref{derived-comm} and the equivalences \eqref{f:equiv}.
\end{proof}

It is an open problem to compute the cohomology $\mathbf{R}^i R_P^G(V)$ of the right adjoint even for a single representation $V$. If $V$ is admissible this might be related to the construction of the higher ordinary parts functors in \cite{Eme}.

\subsection{The left adjoints}\label{subsec:left-adjoints}

In \S1 (G) we have noted that the functor $\mathbf{X}_{G,P} \otimes_{H_M} - : \Mod(H_M) \longrightarrow \Mod(H_G)$ has a left adjoint functor, which is exact. In order to compute this left adjoint we temporarily denote it by $\ell(-)$. Since $H_G$ is a $(H_G,H_G)$-bimodule we have that $\ell(H_G)$ is a $(H_M,H_G)$-bimodule. We consider the natural isomorphism of adjunction
\begin{equation*}
  \Hom_{H_G}(Z,\mathbf{X}_{G,P} \otimes_{H_M} Y) \cong \Hom_{H_M}(\ell(Z),Y) \ .
\end{equation*}
Taking $Z$ to be $H_G$ we deduce a natural isomorphism
\begin{equation*}
  \mathbf{X}_{G,P} \otimes_{H_M} Y \cong \Hom_{H_M}(\ell(H_G),Y) \ .
\end{equation*}
Recall from \S1 (F) that $\mathbf{X}_{G,P}$ is a flat right $H_M$-module. Hence $\ell(H_G)$ must be a projective left $H_M$-module. It also follows that the functor $Y \longmapsto \Hom_{H_M}(\ell(H_G),Y)$ commutes with arbitrary direct sums. This implies\footnote{Here is the argument: Let $Q$ be a projective left module over the ring $A$ such that the functor $\Hom_A(Q,-)$ commutes with arbitrary direct sums. Write $Q \oplus Q' = \oplus_{j \in J} A$ as a direct summand of a free module. Let $\alpha : Q \hookrightarrow \oplus_{j \in J} A$ denote the inclusion map. Then $\alpha \in \Hom_A(Q,\oplus_{j \in J} A) = \oplus_{j \in J} \Hom_A(Q,A)$. Hence $\alpha$ is the sum of finitely many $\alpha_j : Q \rightarrow A$, which means that $Q \subseteq \oplus_{j \in J_0} A$ for some finite subset $J_0 \subseteq J$. It follows that $Q \oplus (Q' \cap \oplus_{j \in J_0} A) = \oplus_{j \in J_0} A$ and therefore that $Q$ is finitely generated.} that $\ell(H_G)$ is finitely generated as an $H_M$-module. Finally, by taking $Y$ to be $H_M$ in the last natural isomorphism, we obtain an isomorphism $\mathbf{X}_{G,P} \cong \Hom_{H_M}(\ell(H_G),H_M)$ and then also an isomorphism $\ell(H_G) \cong \Hom_{H_M}(\mathbf{X}_{G,P},H_M)$. This proves the following fact.

\begin{lemma}\label{underived-left-adj}
  $\mathbf{X}_{G,P}$, as a right $H_M$-module, is finitely generated projective, and the functor $\mathbf{X}_{G,P} \otimes_{H_M} - $ has the left adjoint functor $\Hom_{H_M}(\mathbf{X}_{G,P},H_M) \otimes_{H_G} -$.
\end{lemma}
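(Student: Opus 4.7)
The plan is to identify the exact left adjoint of $\mathbf{X}_{G,P} \otimes_{H_M} -$, whose existence is guaranteed by property (G), by means of an Eilenberg--Watts type argument built on the faithful flatness of $\mathbf{X}_{G,P}$ from property (F). Call this unknown left adjoint $\ell$. I will show that $\ell(H_G)$ is, as an $(H_M, H_G)$-bimodule, canonically isomorphic to $\Hom_{H_M}(\mathbf{X}_{G,P}, H_M)$, which simultaneously yields the structural claim on $\mathbf{X}_{G,P}$ and the explicit description of the adjoint functor.

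First I would write down the adjunction
\begin{equation*}
  \Hom_{H_G}(Z, \mathbf{X}_{G,P} \otimes_{H_M} Y) \cong \Hom_{H_M}(\ell(Z), Y)
\end{equation*}
and specialize at $Z = H_G$. Using $\Hom_{H_G}(H_G, -) = \id$, one obtains a natural isomorphism
\begin{equation*}
  \mathbf{X}_{G,P} \otimes_{H_M} Y \cong \Hom_{H_M}(\ell(H_G), Y)
\end{equation*}
in $Y \in \Mod(H_M)$. Applying the functor $\ell$ to right multiplication by elements of $H_G$ equips $\ell(H_G)$ with a right $H_G$-action, so this is naturally an isomorphism of left $H_G$-modules.

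Next I would extract the internal structure of $\ell(H_G)$ as a left $H_M$-module. By (F), $\mathbf{X}_{G,P} \otimes_{H_M} -$ is exact and commutes with arbitrary direct sums; the displayed identification transports these properties to $\Hom_{H_M}(\ell(H_G), -)$. Exactness of the latter gives projectivity of $\ell(H_G)$, and commutation with direct sums, combined with projectivity, forces finite generation by the standard argument recorded in the footnote (a projective module whose $\Hom$ commutes with direct sums embeds as a summand of a finite free module).

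Finally, setting $Y = H_M$ in the natural isomorphism gives $\mathbf{X}_{G,P} \cong \Hom_{H_M}(\ell(H_G), H_M)$ as right $H_M$-modules, so $\mathbf{X}_{G,P}$ is finitely generated projective as a right $H_M$-module, proving the first claim. Biduality for finitely generated projective modules then yields $\ell(H_G) \cong \Hom_{H_M}(\mathbf{X}_{G,P}, H_M)$ as $(H_M, H_G)$-bimodules. Since $\ell$ is a cocontinuous functor between module categories, it is necessarily of Eilenberg--Watts form $\ell(-) \cong \ell(H_G) \otimes_{H_G} -$, and combining with the previous identification one concludes $\ell(-) \cong \Hom_{H_M}(\mathbf{X}_{G,P}, H_M) \otimes_{H_G} -$. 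The main obstacle I anticipate is the bookkeeping of bimodule structures along this chain of isomorphisms so that the final identification of $\ell$ is canonical as an isomorphism of functors and not merely an object-wise statement.
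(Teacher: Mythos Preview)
Your proposal is correct and follows essentially the same route as the paper's own argument: specialize the adjunction at $Z = H_G$ to identify $\mathbf{X}_{G,P} \otimes_{H_M} -$ with $\Hom_{H_M}(\ell(H_G),-)$, use flatness from (F) to deduce projectivity and then finite generation via the footnote argument, and finally set $Y = H_M$ and apply biduality. The only addition is your explicit appeal to Eilenberg--Watts to pass from the identification of $\ell(H_G)$ to that of the functor $\ell$ itself, which the paper leaves implicit.
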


It needs to be pointed out, though, that the diagram
\begin{equation*}
  \xymatrix{
    \Mod_k(G) \ar[d]_{(-)_N}  \ar[rr]^{(-)^{I_G}} && \Mod(H_G) \ar[d]^{\Hom_{H_M}(\mathbf{X}_{G,P},H_M) \otimes_{H_G} -} \\
    \Mod_k(M) \ar[rr]^{(-)^{I_M}} &&   \Mod(H_M)  }
\end{equation*}
is NOT commutative as shown in \cite{OV} Cor.\ 4.11.

At this point we recall that the category $\Mod_k(G)$ has arbitrary direct products which are constructed in the following way. Let $(V_i)_{i \in I}$ be a family of smooth $G$-representations. Its direct product in $\Mod_k(G)$
\begin{align*}
  \prod^G_{i \in I} V_i  & := \text{smooth part of $\prod_{i \in I} V_i$}  \\
   & = \bigcup_{U \subseteq G} (\prod_{i \in I} V_i)^U  \qquad\text{(with $U$ running over all open subgroups of $G$)}
\end{align*}
is the smooth part of the cartesian product of the $V_i$ with the diagonal $G$-action. It is a formal triviality that direct products of injective objects are injective.

\begin{lemma}\label{Ind-product}
  The functor $\Ind_P^G : \Mod_k(M) \longrightarrow \Mod_k(G)$ commutes with arbitrary direct products.
\end{lemma}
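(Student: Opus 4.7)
The plan is to deduce the lemma almost immediately from property (B) of the setting. By property (B), the functor of $N$-coinvariants $(-)_N : \Mod_k(G) \to \Mod_k(M)$ is left adjoint to $\Ind_P^G$, so $\Ind_P^G$ is a right adjoint. A right adjoint preserves all limits that exist in its source, and in particular arbitrary direct products. So the only thing to verify is that the explicit construction of direct products described just before the lemma (take the smooth part of the cartesian product with the diagonal action) really computes the categorical product in $\Mod_k(M)$, and similarly in $\Mod_k(G)$.

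First I would spell out that verification, which is standard: for a family $(V_i)_{i\in I}$ in $\Mod_k(M)$, the cartesian product $\prod_i V_i$ with diagonal $M$-action receives the universal cone among all (not necessarily smooth) $k[M]$-modules, and if the test object $W$ is already smooth, then every $M$-equivariant map $W \to \prod_i V_i$ automatically factors through the smooth part $\prod^M_i V_i$ (since the image of a smooth representation is smooth). Hence $\prod^M_i V_i$ is the categorical product in $\Mod_k(M)$, and the same argument applies on the $G$-side.

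Then I would conclude: given a family $(V_i)_{i\in I}$ in $\Mod_k(M)$, adjointness gives for every smooth $G$-representation $W$ a chain of natural bijections
\begin{equation*}
  \Hom_G\bigl(W, \Ind_P^G(\prod^M_i V_i)\bigr) \cong \Hom_M\bigl(W_N, \prod^M_i V_i\bigr) \cong \prod_i \Hom_M(W_N, V_i) \cong \prod_i \Hom_G(W, \Ind_P^G V_i),
\end{equation*}
which by the Yoneda lemma identifies $\Ind_P^G(\prod^M_i V_i)$ with the categorical product $\prod^G_i \Ind_P^G V_i$ in $\Mod_k(G)$, and under this identification the projections are the ones obtained from the structural projections of the input family.

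There is really no obstacle to overcome; this is purely formal. If one preferred an explicit proof instead, one could write down the natural map sending a function $f \in \Ind_P^G(\prod^M_i V_i)$ to the tuple $(\pi_i \circ f)_i$ of its components and check by hand that it lands in the smooth part of $\prod_i \Ind_P^G V_i$ and is bijective, using that any $f$ is already right-invariant under a single compact open subgroup of $G$. Both routes are short, so I would present the adjunction argument, which also makes transparent why the same fact fails for $(-)_N$ in general.
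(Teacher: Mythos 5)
Your argument is correct, but it takes a genuinely different route from the paper. You deduce the lemma formally: property (B) exhibits $\Ind_P^G$ as a right adjoint (of $(-)_N$), and right adjoints preserve whatever limits exist in their source; the only thing left is the standard observation that the smooth part of the cartesian product is the categorical product in $\Mod_k(M)$ and $\Mod_k(G)$, which you verify via the fact that the image of a smooth representation under an equivariant map is smooth. The chain of bijections $\Hom_G(W,\Ind_P^G(\prod^M_i V_i)) \cong \Hom_M(W_N,\prod^M_i V_i) \cong \prod_i\Hom_M(W_N,V_i) \cong \prod_i\Hom_G(W,\Ind_P^G V_i)$ then does the job by Yoneda. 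The paper instead proceeds by hand, exactly along the lines of your alternative suggestion: it writes down the comparison map $\Ind_P^G(\prod^M_i E_i) \to \prod^G_i \Ind_P^G(E_i)$, notes injectivity, and proves surjectivity by assembling a $U$-invariant tuple $(f_i)_i$ into a single function $f(g) := (f_i(g))_i$ and checking, via the subgroups $U_g = g^{-1}Ug \cap M$, that the values of $f$ are smooth vectors of $\prod_i E_i$. Your formal proof is shorter and makes transparent why the analogous statement is automatic for any functor admitting a left adjoint; the paper's explicit proof buys a concrete description of the inverse isomorphism and does not lean on the adjunction (B), which is a nontrivial imported result. Both are complete proofs; your remark that the same reasoning explains the failure for functors without a left adjoint is a nice bonus, though note that $(-)_N$ does preserve products for other reasons or not at all depending on the situation, so that closing aside should be phrased with care rather than asserted.
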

\begin{proof}
Let $(E_i)_{i \in I}$ be a family of smooth $M$-representations. The projection maps $\prod^M_i E_i \rightarrow E_j$ induce to homomorphisms $\Ind_P^G(\prod^M_i E_i) \rightarrow \Ind_P^G(E_j)$ in $\Mod_k(G)$, which, by the universal property of the direct product, combine into a homomorphism $\Ind_P^G(\prod^M_i E_i) \rightarrow \prod^G_i \Ind_P^G(E_i)$. It obviously is injective. To see its surjectivity let $(f_i)_i \in (\prod_i \Ind_P^G(E_i))^U$, for some open subgroup $U \subseteq G$, be an element. Each $f_i$ is a left $U$-invariant function $f_i : G \rightarrow E_i$ such that $f_i(gh) = h^{-1}(f_i(g))$ for any $g \in G$ and $h \in P$. We now define the function $f : G \rightarrow \prod_i E_i$ by $f(g) := (f_i(g))_i$. It is left $U$-invariant and therefore locally constant on $G$. It satisfies $f(gh) = h^{-1}(f(g))$ for any $g \in G$ and $h \in P$. In order to see that $f$ is a preimage under the map in question of $(f_i)_i$ it remains to establish that the values of $f$ lie in $\prod^M_i E_i$. Fix a $g \in G$. Then $U_g := g^{-1} U g \cap M$ is an open subgroup of $M$. For $h = g^{-1} u g \in U_g$ we have $f(g) = f(ug) = f(ghg^{-1} g) = f(gh) = h^{-1}(f(g)$. This shows that the value $f(g)$ is $U_g$-invariant.
\end{proof}

We note that forming an infinite direct product in $\Mod_k(G)$ is not an exact operation because the passage to $U$-invariants is not exact. But we do have the following.

\begin{lemma}\label{DG-product}
 The category $D(G)$ has arbitrary direct products.
\end{lemma}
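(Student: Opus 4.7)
The plan is to produce products in $D(G)$ via h-injective resolutions, using the smooth direct product in $\Mod_k(G)$ described above. Given a family $(V_i^\bullet)_{i \in I}$ in $D(G)$, I replace each by an h-injective resolution $\mathbf{i} V_i^\bullet$. I then form the termwise smooth direct product
\begin{equation*}
  J^\bullet := \prod^G_{i \in I} \mathbf{i} V_i^\bullet
\end{equation*}
in $C(G)$, with differential the product of the componentwise differentials. The claim will be that $J^\bullet$, viewed as an object of $D(G)$, represents the direct product of the $V_i^\bullet$.

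The first thing to verify is that this termwise product computes the product in the abelian category $C(G)$ and, more importantly, in the homotopy category $K(G)$. The abelian statement is immediate from the universal property in $\Mod_k(G)$ applied degreewise. For $K(G)$ one checks that two chain maps $f,g:X^\bullet \to J^\bullet$ are homotopic iff they are so after projection to each factor: given homotopies $s_i:X^\bullet \to \mathbf{i} V_i^{\bullet-1}$, the family $(s_i)_i$ assembles to a single homotopy with values in the smooth product, using the universal property of $\prod^G$ in each degree.

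The main point is then that h-injectivity is stable under products. If $X^\bullet$ is acyclic, then
\begin{equation*}
  \Hom_{K(G)}(X^\bullet, J^\bullet) = \prod_{i \in I} \Hom_{K(G)}(X^\bullet, \mathbf{i} V_i^\bullet) = 0,
\end{equation*}
since each factor vanishes by h-injectivity of $\mathbf{i} V_i^\bullet$. Hence $J^\bullet$ is h-injective. Given any $W^\bullet$ in $D(G)$ I then compute
\begin{equation*}
  \Hom_{D(G)}(W^\bullet, J^\bullet) = \Hom_{K(G)}(W^\bullet, J^\bullet) = \prod_{i \in I} \Hom_{K(G)}(W^\bullet, \mathbf{i} V_i^\bullet) = \prod_{i \in I} \Hom_{D(G)}(W^\bullet, V_i^\bullet),
\end{equation*}
where the first and last equalities use h-injectivity (together with the fact that $V_i^\bullet \simeq \mathbf{i} V_i^\bullet$ in $D(G)$). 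This exhibits $J^\bullet$ as the desired direct product.

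The only point that is not completely formal is the preservation of h-injectivity under the smooth direct product, and this in turn rests on the (routine but necessary) verification that products in $K(G)$ agree with the termwise smooth product in $C(G)$. Note that the argument nowhere uses exactness of $\prod^G$, which is fortunate because the author has just remarked that this exactness fails; all that is needed is the universal property of the product in each degree and the characterization of h-injective complexes by $\Hom$-vanishing on acyclics.
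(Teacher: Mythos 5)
Your argument is correct, but it takes a different route from the paper: the paper disposes of this lemma by citing general theory (Kashiwara--Schapira, Thm.\ 14.2.1 and Cor.\ 10.5.3, i.e.\ the fact that for a Grothendieck abelian category the quotient functor $K \rightarrow D$ admits a right adjoint, whence the localized category inherits products), whereas you construct the product explicitly: replace each $V_i^\bullet$ by an h-injective resolution, take the termwise smooth product $\prod^G$, check that this is the product in $K(G)$, that h-injectivity is stable under such products, and then verify the universal property in $D(G)$ by the usual $\Hom_{D}=\Hom_{K}$ identification for h-injective targets. This is in effect an unwinding of the citation, and it coincides with what the paper itself explains right after the lemma (the observation that products in $D(G)$ are formed by passing to fibrant resolutions and taking their naive product, together with Lemma \ref{inj-closed} on closure of h-injectives under products). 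What the citation buys is brevity and generality; what your version buys is an explicit description of the product, which the paper needs anyway for the subsequent discussion, and a correct emphasis that exactness of $\prod^G$ is never used --- only the universal property degreewise and the $\Hom$-vanishing characterization of h-injectives. Two small points you should make explicit if you write this up: the identification $\Hom_{D(G)}(W^\bullet,J^\bullet) \cong \prod_i \Hom_{D(G)}(W^\bullet,V_i^\bullet)$ must be induced by the canonical projections $J^\bullet \rightarrow \underline{\phantom{x}}\,\mathbf{i}V_i^\bullet \simeq V_i^\bullet$ (it is, since every step in your chain of identifications is composition with these), and the step identifying $\Hom_{K(G)}(X^\bullet,\prod_i \mathbf{i}V_i^\bullet)$ with $\prod_i\Hom_{K(G)}(X^\bullet,\mathbf{i}V_i^\bullet)$ uses that the $\Hom$-complex into the product is the product of $\Hom$-complexes and that cohomology of complexes of $k$-vector spaces commutes with products --- which is exactly the assembling-of-homotopies argument you sketch.
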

\begin{proof}
Apply \cite{KS} Thm.\ 14.2.1 and Cor.\ 10.5.3.
\end{proof}

It is obvious that the category $C(G)$ has direct products, and they are formed in the naive way:
\begin{equation*}
  \prod^G_{i \in I} V^\bullet_i : \quad  \ldots \longrightarrow \prod^G_{i \in I} V^n_i \longrightarrow \prod^G_{i \in I} V^{n+1}_i \longrightarrow  \ldots
\end{equation*}
The homotopy category $K(G)$ then has direct products as well, which can be computed in $C(G)$. But, due to the failure of direct products in $\Mod_k(G)$ being exact, an infinite direct product of acyclic complexes need not to be acyclic. Therefore the natural functor $K(G) \longrightarrow D(G)$ need not preserve infinite direct products. How do we compute then direct products in $D(G)$? For this we go back to the injective model structure on $C(G)$ introduced in section \ref{subsec:enhanc-DG} and let $I(G)$ denote the full subcategory of $C(G)$ consisting of the fibrant objects and $h(\underline{I}(G))$ the homotopy category of the dg-category $\underline{I}(G)$. On the one hand $h(\underline{I}(G))$ is the category with the same objects as $I(G)$ but with morphisms being the homotopy classes of morphisms in $I(G)$. On the other hand the natural functor $h(\underline{I}(G)) \xrightarrow{\simeq} D(G)$ is an equivalence of categories.

\begin{lemma}\label{inj-closed}
  The category $I(G)$ is closed under arbitrary direct products in $C(G)$.
\end{lemma}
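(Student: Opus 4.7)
The plan is to reduce fibrancy to the two concrete conditions appearing in Proposition \ref{inj-model} and then check each separately for a product.

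First I would apply Proposition \ref{inj-model} to the unique morphism $V^\bullet \to 0$: this map is automatically an epimorphism, and its kernel is $V^\bullet$ itself, so fibrancy of $V^\bullet$ is equivalent to $V^\bullet$ being both h-injective and degreewise injective. It therefore suffices to show that a direct product (formed in $C(G)$) of objects satisfying these two conditions again satisfies them.

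For the degreewise condition, the product $\prod^G_{i \in I} V_i^\bullet$ is formed termwise, and we already noted that direct products of injective objects in $\Mod_k(G)$ are injective (a formal consequence of the definition of $\prod^G$ together with the exactness of $\Hom_{\Mod_k(G)}(W,-)$ when $W$ is smooth). Hence if each $V_i^n$ is injective, so is $\prod^G_{i \in I} V_i^n$.

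For h-injectivity, I would use the fact recalled just before the lemma: the homotopy category $K(G)$ has direct products, and they are computed by the naive termwise product in $C(G)$. Given any acyclic complex $A^\bullet$, the universal property in $K(G)$ yields
\begin{equation*}
  \Hom_{K(G)}\bigl(A^\bullet,\ \prod^G_{i \in I} V_i^\bullet\bigr) \;\cong\; \prod_{i \in I} \Hom_{K(G)}(A^\bullet, V_i^\bullet) \;=\; 0,
\end{equation*}
since each factor vanishes by h-injectivity of $V_i^\bullet$. Thus the product is h-injective. Combining both preservation properties gives fibrancy of the product, proving the lemma.

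The only potential subtlety, and the step I would be most careful about, is invoking that products in $K(G)$ agree with products in $C(G)$ for the h-injectivity calculation; but this is precisely the observation the authors made in the paragraph preceding the lemma, so no additional work is required.
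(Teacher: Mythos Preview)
Your argument is correct and is essentially the paper's own proof, just spelled out in more detail: the paper likewise invokes Proposition~\ref{inj-model} and then observes that preservation of h-injectivity under products is straightforward from the definition (the degreewise injectivity having already been noted as a formal triviality). Your careful remark about products in $K(G)$ being computed in $C(G)$ is exactly the ingredient making the h-injectivity computation go through.
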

\begin{proof}
Because of Prop.\ \ref{inj-model} we need to check that direct products of h-injective complexes are h-injective. But this is straightforward from the definition.
\end{proof}

This result implies that, in order to form direct products in $D(G)$, one has to pass to fibrant resolutions and then take their naive direct product.

We warn the reader that neither the natural functor $\Mod_k(G)\longrightarrow D(G)$ nor the cohomology functor $h^* : D(G) \longrightarrow \Mod_k(G)$ commutes with arbitrary direct products.

But we have the following remarkable result.

\begin{proposition}[C.\ Heyer]
   The functor $Ind^G_P : D(M) \longrightarrow D(G)$ commutes with arbitrary direct products.
\end{proposition}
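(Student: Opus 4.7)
The plan is to compute direct products in $D(G)$ and $D(M)$ via fibrant replacements, exactly as explained in the discussion following Lemma \ref{inj-closed}, and then to argue that $\Ind_P^G$ transports such a fibrant representative of a product in $D(M)$ directly to a fibrant representative of the corresponding product in $D(G)$. The key technical input is the exactness of the $N$-coinvariants functor $(-)_N : \Mod_k(G) \longrightarrow \Mod_k(M)$, which is another part of \cite{Vig} Prop.\ 4.2 beyond what was recorded in property (B). Applied degreewise, $(-)_N$ becomes an exact functor $C(G) \to C(M)$ which remains left adjoint to $\Ind_P^G : C(M) \to C(G)$, and this adjunction descends to the homotopy categories $K(G)$ and $K(M)$ since both functors are additive.

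The first main step is to show that $\Ind_P^G$ carries fibrant complexes of $C(M)$ to fibrant complexes of $C(G)$. By Prop.\ \ref{inj-model}, this amounts to two verifications. Degreewise injectivity: since $(-)_N$ is exact and left adjoint to $\Ind_P^G$, the latter preserves injective objects in $\Mod_k(M)$. H-injectivity: for any acyclic $A^\bullet \in C(G)$, the exactness of $(-)_N$ implies $(A^\bullet)_N$ is acyclic in $C(M)$, so for any h-injective $J^\bullet \in C(M)$ the adjunction gives
\begin{equation*}
  \Hom_{K(G)}(A^\bullet, \Ind_P^G(J^\bullet)) \cong \Hom_{K(M)}((A^\bullet)_N, J^\bullet) = 0 \, ,
\end{equation*}
so that $\Ind_P^G(J^\bullet)$ is h-injective.

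Given this, I would take a family $(V_i^\bullet)_{i \in I}$ in $D(M)$, fix fibrant replacements $V_i^\bullet \xrightarrow{\simeq} \mathcal{J}_i^\bullet$ via Thm.\ \ref{enriched-replac}, and form the naive degreewise product $\mathcal{J}^\bullet := \prod^M_i \mathcal{J}_i^\bullet$ in $C(M)$. By Lemma \ref{inj-closed} applied to $M$, $\mathcal{J}^\bullet$ is fibrant, hence represents the direct product $\prod_i V_i^\bullet$ in $D(M)$. Applying $\Ind_P^G$ and using Lemma \ref{Ind-product} degreewise yields the identification
\begin{equation*}
  \Ind_P^G(\mathcal{J}^\bullet) \; = \; \prod^G_i \Ind_P^G(\mathcal{J}_i^\bullet)
\end{equation*}
in $C(G)$. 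Each factor $\Ind_P^G(\mathcal{J}_i^\bullet)$ is fibrant by the previous paragraph, so by Lemma \ref{inj-closed} applied to $G$ the right-hand side is again fibrant, and therefore represents $\prod_i \Ind_P^G(V_i^\bullet)$ in $D(G)$. Combining these identifications with the fact that $\Ind_P^G$ is exact and takes quasi-isomorphisms to quasi-isomorphisms (property (A)) gives the desired natural isomorphism $\Ind_P^G(\prod_i V_i^\bullet) \xrightarrow{\simeq} \prod_i \Ind_P^G(V_i^\bullet)$.

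The main obstacle is the preservation of h-injectivity under $\Ind_P^G$, which rests squarely on the (nontrivial) exactness of the coinvariants functor $(-)_N$; once this is in hand, the rest of the argument is a routine combination of Lemmas \ref{Ind-product} and \ref{inj-closed} with Prop.\ \ref{inj-model}.
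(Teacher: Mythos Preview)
The paper does not supply its own proof of this proposition: it is stated as a result of C.\ Heyer, with the bibliography pointing to unpublished notes \cite{Hey}, and no argument is reproduced. So there is nothing in the text to compare your proof against directly.

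That said, your argument is correct and is exactly the proof the surrounding infrastructure of the paper is set up to support. You use precisely Prop.\ \ref{inj-model} (characterisation of fibrant objects), Lemma \ref{Ind-product} ($\Ind_P^G$ commutes with products on the abelian level) and Lemma \ref{inj-closed} (fibrant objects are closed under products), together with the one input not recorded among (A)--(E), namely the exactness of $(-)_N$ on $\Mod_k(G)$. You are right that this is the crux: once $(-)_N$ is exact, the standard adjunction argument gives preservation of both degreewise injectivity and h-injectivity by $\Ind_P^G$, and the rest is bookkeeping with fibrant models of products. The passage of the $((-)_N,\Ind_P^G)$ adjunction to $K(G),K(M)$ is unproblematic, as you note. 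Your identification of the exactness of $(-)_N$ (which is genuinely more delicate in characteristic $p$ than in other characteristics) as the one nontrivial representation-theoretic input is on point; given that, the argument is routine, and it is very likely the argument Heyer had in mind as well.
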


By an application of Brown representability this has the following consequence.

\begin{corollary}
   The functor $Ind^G_P : D(M) \longrightarrow D(G)$ has a left adjoint.
\end{corollary}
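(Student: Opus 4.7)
The plan is to invoke the dual form of Brown representability for well-generated triangulated categories (in the version due to Neeman and Krause). As recalled in the proof of Proposition \ref{derived-comm}, the derived category $D(M)$ has arbitrary direct sums and a compact generator $\mathbf{X}_M$, so it is compactly generated and in particular well-generated. This entails Brown representability for the opposite category: every cohomological functor $D(M) \to \mathrm{Ab}$ that sends arbitrary direct products in $D(M)$ to products of abelian groups is corepresentable.

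For each $Z \in D(G)$ I would then consider the covariant functor
\begin{equation*}
F_Z : D(M) \longrightarrow \mathrm{Ab}, \qquad Y \longmapsto \Hom_{D(G)}(Z, \Ind_P^G(Y)).
\end{equation*}
It is cohomological because $\Ind_P^G$ is exact and $\Hom_{D(G)}(Z,-)$ is cohomological; and it sends direct products in $D(M)$ to products of abelian groups because $\Ind_P^G$ commutes with arbitrary direct products by the preceding Proposition, while $\Hom_{D(G)}(Z,-)$ automatically does so on $D(G)$. The dual Brown representability statement therefore produces an object $L(Z) \in D(M)$ together with a natural isomorphism $\Hom_{D(M)}(L(Z),-) \cong F_Z$.

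A standard Yoneda argument then extends $L$ to a functor $L : D(G) \longrightarrow D(M)$ and promotes the isomorphism above to a bifunctorial one
\begin{equation*}
\Hom_{D(M)}(L(Z), Y) \cong \Hom_{D(G)}(Z, \Ind_P^G(Y)),
\end{equation*}
exhibiting $L$ as a left adjoint of $\Ind_P^G$. The argument is entirely formal once the dual Brown representability theorem is invoked; its two hypotheses --- compact generation of $D(M)$ and preservation of direct products by $\Ind_P^G$ --- have both been secured earlier in the paper, so I do not expect any real obstacle beyond pinning down a convenient reference for the well-generated version of Brown representability (e.g.\ in Krause's survey on localization of triangulated categories or Neeman's book).
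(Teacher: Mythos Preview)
Your argument is correct and matches the paper's approach: the paper simply states that the corollary follows from Heyer's Proposition ``by an application of Brown representability'', and you have spelled out precisely this application, using the dual (covariant) form valid for compactly generated triangulated categories such as $D(M)$.
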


\end{document}